\documentclass[12pt,reqno]{amsart}
\usepackage{url}
\usepackage{color}
\usepackage{amssymb}
\let\oldlabel=\label
\def\prellabel{\marginparsep=1em\marginparwidth=44pt
  \def\label##1{\oldlabel{##1}\ifmmode\else\ifinner\else
         \marginpar{{\footnotesize\ \\ \tt
                    ##1}}\fi\fi}}
\def\NZQ{\mathbb}               
\def\NN{{\NZQ N }}
\def\QQ{{\NZQ Q}}

\def\VV{V}

\def\opn#1#2{\def#1{\operatorname{#2}}} 
\opn\chara{char}
\opn\rank{rank}
\opn\hilb{Hilb}
\opn\gr{gr}
\opn\Rees{{\mathcal R}}
\usepackage[latin1]{inputenc}
\usepackage{mathptmx}
\newtheorem{theorem}{Theorem}[section]
\newtheorem{lemma}[theorem]{Lemma}
\newtheorem{corollary}[theorem]{Corollary}
\newtheorem{proposition}[theorem]{Proposition}
\newtheorem{assumption}[theorem]{Assumption}

\theoremstyle{definition}
\newtheorem{remark}[theorem]{Remark}
\newtheorem{definition}[theorem]{Definition}
\newtheorem{example}[theorem]{Example}
\newtheorem{remark/example}[theorem]{Remark/Example}

\newtheorem*{theorem*}{Theorem}

\let\epsilon=\varepsilon
\let\phi=\varphi
\let\kappa=\varkappa

\opn\ini{in}
\opn\KRS{KRS}
\opn\krs{krs}
\opn\Krs{Krs}
\opn\DEL{DEL}
\opn\diag{diag}
\opn\Ker{Ker}
\opn\Image{Im}
\opn\DD{{\mathcal D}}
\opn\SS{{\mathcal S}}
\opn\MM{{\mathcal M}}
\opn\GL{GL}
\opn{\hht}{ht}
\opn\Cl{Cl}
\opn\cl{cl}
\opn\height{height}
\opn\reg{reg}
\opn\Reg{Reg}
\opn\pd{pd}
\opn\supp{supp}
\opn\depth{depth}
\opn{\HS}{HS}
\opn{\HF}{HF}
\opn{\HP}{HP}

\opn{\Ass}{Ass}
\opn{\Min}{Min}

\opn{\codim}{codim} 
\opn{\image}{Im}

\def\C{\mathcal C}

\def\mm{{\mathfrak m}}

\def\addots{\mathinner{\mkern1mu\raise1pt\hbox{.}\mkern2mu\raise4pt\hbox{.}
        \mkern2mu\raise7pt\vbox{\kern7pt\hbox{.}}\mkern1mu}}

\numberwithin{equation}{section}

\makeatletter
\def\author@andify{%
  \nxandlist {\unskip ,\hspace{3em}\ignorespaces}%
  {\unskip {}\hspace{3em}\ignorespaces}
  {\unskip ,\hspace{3em}\ignorespaces}%
}
\makeatother

\begin{document}

\title{A  complex associated to a family of ideals and applications to subspace arrangements}

\author{Aldo Conca}
\address{Dipartimento di Matematica,  Universit\`a di Genova \\Via Dodecaneso 35, 16146 Genova, Italy}
\email{aldo.conca@unige.it}

\author{Manolis C. Tsakiris}
\address{Academy of Mathematics and Systems Science \\Chinese Academy of Sciences, 100190, Beijing, China}
\email{manolis@amss.ac.cn}
 
 \thanks{This research was conducted while the authors were visiting SLMath/UC-Berkeley  within the framework of the Commutative Algebra semester in 2024. The authors gratefully acknowledge the support of the host institution.}

\thanks{M.C. Tsakiris is supported by the National Key R$\&$D Program of China (2023YFA1009402).  A. Conca  was partially supported by MIUR Excellence Department Project awarded to the Dept.~of Mathematics, Univ.~of Genova, CUP D33C23001110001,  by PRIN 2020355B8Y  ``Squarefree Gr\"obner degenerations, special varieties and related topics"  and by GNSAGA-INdAM}
 

\subjclass[2020]{13A30}

\maketitle

\begin{center}
\textit{Dedicated to the memory of Winfried Bruns.}
\end{center}

\begin{abstract}
 We reprove and generalize a result of Derksen describing the Hilbert polynomial of the vanishing ideal of a transversal subspace arrangement in projective space. When the subspaces have all codimension $2$, we describe explicitly the local cohomology modules of the corresponding defining ideal. This is achieved via a complex naturally associated to the subspace arrangement, whose basic properties are also investigated.
 \end{abstract} 

\section*{Introduction} 
We introduce a complex $\C$ naturally associated to a family of ideals $I_1,\dots,I_n$ in a commutative ring $S$  whose $0$-th cohomology is the intersection ideal $\cap_{i=1}^n I_i$.  This complex  (indeed  a minor variation of it) appears  also in the recent preprint  \cite{BCH} of Banerjee,  Chardin and  Holanda,  but for different purposes.  
We study elementary conditions under which the higher cohomology modules of  $\C$ do vanish. 

 In the case of a family of ideals associated to an arrangement $\VV=V_1,\dots, V_n$ of subspaces of linear forms of $\mathbb{K}[x_1,\dots,x_r]$, we use the  complex $\C$ to prove and generalize a result of Derksen \cite{Derksen} describing the Hilbert polynomial of the ideal $I_\VV = \cap_{i \in [n]} (V_i)$. This is the vanishing ideal of the dual subspace arrangement $\mathbb{V}(\VV)= \cup_{i \in [n]} \mathbb{V}(V_i)$ in projective space $\mathbb{P}^{r-1}$.
Furthermore we show that the positive cohomology modules of $\C$ have finite length. When all the $V_i$ have the same dimension, the complex can be used to give a quite explicit description of the local cohomology modules of $I_\VV$. We do this in details when $\dim V_i=2$ for all $i$. It turns out that, apart from a small exception, the $j$-th local cohomology module of $I_\VV$ is either equal to a cohomology module of $\C$ (when $j$ is small) or to equal to the direct sum of the local cohomology modules of polynomial rings (when $j$ is large). 
As a corollary we have that the top non-vanishing degrees of the cohomology modules of $\C$ determine the regularity of $I_\VV$. 

\section{A complex associated to a family of ideals}

 Let $S$ be a commutative ring with $1$. For a pair of ideals $I_1$ and $I_2$ of $S$ we have a well known short exact sequence 
 \begin{equation}
 \label{MV}
 0\to S/(I_1\cap I_2) \to S/I_1\oplus S/I_2 \to S/(I_1+I_2)\to 0
 \end{equation} 
 
 We extend this construction to a family of ideals $I_1,I_2,\dots I_n$ as follows. 
 
 Let $F$ be a free $S$-module of rank $n\in \NN$ with given basis $e_1,\dots,e_n$.   The exterior algebra 
 $$E=   \wedge  F= \oplus_{i=0}^n \wedge^i F=\oplus_{i=0}^n E_i$$ equipped with the differential induced by the multiplication with $L=\sum_{i=1}^n e_i\in E_1$ is an exact complex: 
$$(E,L):  \quad 0\to E_0=S \to E_1=F \to E_2  \to \dots \to E_n  \to  0.$$ 
As an $S$-module $E_i$ is free with basis the set of elements of the form $e_A=e_{a_1}\wedge \dots \wedge e_{a_i}$, where $A=\{a_1,a_2,\dots, a_i\}\subseteq [n]$ and $a_1<\dots<a_i$.

 Given ideals $I_1,I_2,\dots, I_n$ of $S$ we set  $I_A=\sum_{i\in A} I_i$ for every $A\subseteq [n]$. If $A=\{i,j\}$ we will use $I_{i,j}$ for $I_{\{ij\}}$. We consider  in the algebra $E$ the homogeneous (and hence two sided) ideal ${\mathcal I}$ generated by $I_1e_1, I_2e_2, \dots,  I_ne_n$, that is: 
 $${\mathcal I}=\oplus_{A}  I_Ae_A.$$  
As  ${\mathcal I}$  is an ideal of $E$ it is also a subcomplex of $(E,L)$.  It follows that $E/\!\!\mathcal{I}$ is a (ring and a) complex of $S$-modules with differential induced by the multiplication with $L$ on  it.  We denote this complex  by $\C(I_1,\dots,I_n)$, that is
$$\C(I_1,\dots,I_n)=(E/\!\!\mathcal{I},L)$$
and denote by $H^i(I_1,\dots, I_n)$ its $i$-th cohomology. 

\begin{example} 
\label{exn=2}
For $n=2$ we obtain the complex: 
  $$\C(I_1,I_2) :\quad  0\to S\to (S/I_1)e_1\bigoplus (S/I_2)e_2\to (S/I_1+I_2)e_{12} \to 0$$  
  with cohomology $H^0(I_1,I_2)=I_1\cap I_2$ and $H^i(I_1,I_2)=0$ for $i=1,2$. Hence $\C(I_1,I_2)$  induces the short exact sequence (\ref{MV}). 
  \end{example} 
  
  \begin{example} 
 For $n=3$ we have: 
  $$\C(I_1,I_2,I_3): \quad 0\to S\to  \bigoplus_{i}  (S/I_i)e_i \to \bigoplus_{i<j}  (S/I_i+I_j)e_{ij}  \to   (S/I_1+I_2+I_3) e_{123} \to 0$$  
  with   $H^0(I_1,I_2,I_3)=I_1\cap I_2\cap I_3$. We will see in \ref{T3<>0} that $H^i(I_1,I_2,I_3)=0$ for $i=2,3$  while $H^1(I_1,I_2,I_3)$ can be non-zero.  
  \end{example} 
  
  \begin{remark} 
  \label{easy}
  We list some   properties of the complex $\C(I_1,\dots,I_n)$ whose verifications are straightforward.  
  \begin{itemize} 
  \item[(1)] If $I_n=S$ then $\C(I_1,\dots,I_n)=\C(I_1,\dots,I_{n-1})$. 
  \item[(2)] For a multiplicative set $T$ of $S$ one has $\C(I_1,\dots,I_n)_T=\C(I'_1,\dots,I'_n)$ where $I_i'=I_iS_T$. 
  \item[(3)] The complex  $\C(I_1,\dots,I_n)$ can be obtained also as the tensor product of complexes
  $$\bigotimes_{i=1}^n  (S\to S/I_i)$$
  with $S$ in homological position $0$ and $S/I_i$ in homological position $1$. 
  \item [(4)] $\C(I_1,\dots,I_n)\otimes S/J=\C(I'_1,\dots,I'_n)$ with $I_i'=I_i+J/J$, for any ideal $J$. 
  \end{itemize}
  \end{remark} 
  
  The $0$-th cohomology of the complex $\C(I_1,\dots,I_n)$ is easily identified: 
  
 $$H^0(I_1,\dots,I_n)=\cap_{i=1}^n I_i$$
 
But what can we say about  the higher cohomology? In particular, for which collections of ideals $I_1,\dots, I_n$ does it vanish? To answer these questions we make explicit the inductive structure of the complex  $\C(I_1,\dots,I_n)$.    
 
 \begin{lemma}
 \label{seqC}
  Given ideals $I_1,\dots, I_n$ of $S$, consider the ideals $J_1,\dots, J_{n-1}$ of $S/I_n$ defined as  $J_i=(I_i+I_n)/I_n$.  We have a short exact sequence of complexes: 
 $$0\to \C(J_1,\dots, J_{n-1})(-1) \to \C(I_1,\dots,I_n)\to  \C(I_1,\dots,I_{n-1})\to 0.$$
 \end{lemma}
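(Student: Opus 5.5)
Fix the degree $i$ and split the basis of $E_i$ according to whether $n\in A$ or not. For $A\subseteq[n-1]$ the summand of $\C(I_1,\dots,I_n)$ in degree $i$ is $(S/I_A)e_A$. For $A=B\cup\{n\}$ with $B\subseteq[n-1]$, $|B|=i-1$, it is $(S/(I_B+I_n))e_{B}\wedge e_n$. Since $(S/I_n)/J_B\cong S/(I_B+I_n)$ with $J_B=\sum_{j\in B}J_j=(I_B+I_n)/I_n$, we get a degreewise splitting of $S$-modules
$$\C(I_1,\dots,I_n)^i=\C(I_1,\dots,I_{n-1})^i\oplus \C(J_1,\dots,J_{n-1})^{i-1}.$$
The shift $(-1)$ accounts for $e_B\wedge e_n$ sitting in degree $|B|+1$. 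The plan is to show that the second summand is a subcomplex and that the quotient is the first complex with its own differential.

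Define $\iota\colon \C(J_1,\dots,J_{n-1})(-1)\to\C(I_1,\dots,I_n)$ by $\bar x\, e_B\mapsto \bar x\, e_B\wedge e_n$. Equivalently, $\iota$ is right multiplication by $e_n$ on $E'/\mathcal I'$ (the primed objects being built on $e_1,\dots,e_{n-1}$ over $S/I_n$). It is well defined, and injective, by the identification above. Let $\pi\colon \C(I_1,\dots,I_n)\to \C(I_1,\dots,I_{n-1})$ kill every $e_A$ with $n\in A$ and be the identity on the other basis elements. Equivalently, $\pi$ is the ring quotient of $E/\mathcal I$ by the ideal generated by $e_n$. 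Exactness in each degree is then immediate from the splitting.

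It remains to check that $\iota$ and $\pi$ commute with the differentials, which are multiplication by $L=L'+e_n$ with $L'=\sum_{j<n}e_j$. For $\pi$, note that $\pi$ is a ring map sending $L$ to $L'$, so $\pi(Lx)=L'\pi(x)$. For $\iota$, compute $L\cdot(x\wedge e_n)=L'\wedge x\wedge e_n+e_n\wedge x\wedge e_n$. The last term vanishes because it contains $e_n$ twice, so $\iota(L'x)=L\,\iota(x)$. This is exactly why $\iota$ is taken to be multiplication by $e_n$ on the right: with the differential given by left multiplication by $L$, no sign appears. One also checks that the residue classes match, since multiplying $x\in S/(I_B+I_n)$ by $e_j$ lands in $S/(I_{B\cup\{j\}}+I_n)$, compatibly with the quotient maps on both sides.

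The only real care point is this sign and well-definedness bookkeeping. Once $\iota$ is defined via right multiplication by $e_n$, the differentials commute on the nose, and the argument is complete.
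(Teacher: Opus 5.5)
Your proof is correct and is essentially the paper's argument: the paper takes the ideal $H$ generated by $e_n$ in $E/\mathcal{I}$ as the subcomplex (exactly the image of your $\iota$) and identifies the quotient by $H$ with $\C(I_1,\dots,I_{n-1})$, just as your $\pi$ does. Your explicit use of right multiplication by $e_n$ only makes precise the identification and sign bookkeeping that the paper leaves implicit.
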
 
 
 \begin{proof} In the ring $\C(I_1,\dots,I_n)=E/\!\!\mathcal{I}$ we consider the ideal $H$ generated by $e_n$.  
 By construction $H$ is a subcomplex of $\C(I_1,\dots,I_n)$ so that we have a short exact sequence of complexes: 
 $$0\to H \to \C(I_1,\dots,I_n) \to \C(I_1,\dots,I_n)/H\to 0.$$
 Now we  note that $H$ gets identified with $\C(J_1,\dots, J_{n-1})(-1)$ because the multiplication by $L=\sum_{i=1}^n e_i$ on $H$ equals the multiplication by $\sum_{i=1}^{n-1} e_i$. Finally  $\C(I_1,\dots,I_n)/H$ gets identified with $\C(I_1,\dots,I_{n-1})$ because the multiplication by  $L=\sum_{i=1}^n e_i$ modulo $e_n$ equals the multiplication by  $\sum_{i=1}^{n-1} e_i$. 
 \end{proof} 
  
As a corollary we have: 
 
\begin{corollary} 
\label{longex}
With the notation of Lemma \ref{seqC} we set 
$$H^i_n=H^i(I_1,\dots,I_n),  \quad H^i_{n-1}=H^i(I_1,\dots,I_{n-1}) \mbox{ and }H^i(J)=H^i(J_1,\dots, J_{n-1}).$$ Then we have a long exact sequence 
 $$0\to T_n \to H^1_n \to H^1_{n-1}\to H^1(J)\to H^2_n \to H^2_{n-1}\to H^2(J)\to H^3_n \to  \dots, $$ 
where $$T_n=\frac{ \bigcap_{i=1}^{n-1}  I_{i,n}   }{  (\bigcap_{i=1}^{n-1}  I_i)+I_n}.$$
 \end{corollary}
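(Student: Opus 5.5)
The plan is to take the long exact sequence in cohomology coming from the short exact sequence of complexes in Lemma \ref{seqC}, and then to compute its beginning explicitly. The shift $(-1)$ means that $H^i$ of $\C(J_1,\dots,J_{n-1})(-1)$ equals $H^{i-1}(J)$. The long exact sequence therefore reads
$$0\to H^{-1}(J)\to H^0_n\to H^0_{n-1}\to H^0(J)\to H^1_n\to H^1_{n-1}\to H^1(J)\to H^2_n\to\cdots,$$
and $H^{-1}(J)=0$. From $H^1_n$ onward this is exactly the displayed sequence. It remains to identify the cokernel of $H^0_{n-1}\to H^0(J)$ with $T_n$, because then $T_n$ injects into $H^1_n$.

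First I use the identification of $H^0$ as an intersection. This gives $H^0_n=\bigcap_{i=1}^n I_i$ and $H^0_{n-1}=\bigcap_{i=1}^{n-1} I_i$ inside $S$. For $J$, computed in the ring $S/I_n$, it gives $H^0(J)=\bigcap_{i=1}^{n-1} J_i=\bigcap_{i=1}^{n-1}(I_i+I_n)/I_n=\bigl(\bigcap_{i=1}^{n-1}I_{i,n}\bigr)/I_n$, since all the ideals $I_{i,n}$ contain $I_n$.

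Next I determine the map $H^0_{n-1}\to H^0(J)$ by chasing the connecting homomorphism in degree $0$. Take $a\in\bigcap_{i<n}I_i$, viewed as a cocycle in degree $0$ of $\C(I_1,\dots,I_{n-1})$. Lift it to $a\in S=E_0$ in $\C(I_1,\dots,I_n)$ and apply $L$. This gives $\sum_i a e_i$, which is congruent to $a e_n$ modulo $\mathcal I$, because $a\in I_i$ for $i<n$. The element $a e_n$ lies in $H$ and corresponds to the class of $a$ in $S/I_n$ in degree $0$ of $\C(J)$. So the connecting map is induced by the projection $a\mapsto a+I_n$, up to sign.

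Its image is therefore $\bigl((\bigcap_{i<n}I_i)+I_n\bigr)/I_n$, and the cokernel is $\bigl(\bigcap_{i<n}I_{i,n}\bigr)/\bigl((\bigcap_{i<n}I_i)+I_n\bigr)=T_n$. The only real work in the proof is this connecting-map chase and keeping track of the shift; the rest is formal.
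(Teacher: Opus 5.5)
Your proposal is correct and follows the paper's proof. You take the long exact sequence from Lemma \ref{seqC}, identify $H^0_{n-1}=\bigcap_{i<n}I_i$ and $H^0(J)=\bigl(\bigcap_{i<n}I_{i,n}\bigr)/I_n$, and read off $T_n$ as the cokernel of the connecting map $a\mapsto a+I_n$; your explicit chase of that connecting map simply supplies the verification the paper asserts without detail.
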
 
 
 \begin{proof} The short exact sequence of complexes  \ref{seqC} gives a long exact sequence: 
$$0\to H^0_n \to  H^0_{n-1}  \stackrel{f} \to H^0(J) \to H^1_n  \to  H^1_{n-1} \to H^1(J) \to \cdots $$
Since   $H^0_{n-1}=\cap_{i=1}^{n-1} I_i$,  $H^0(J)=\cap_{i=1}^{n-1} I_{i,n}/I_n$   and since the connecting homomorphism $f$ is induced by the inclusion 
$$\cap_{i=1}^{n-1} I_i \subseteq  \cap_{i=1}^{n-1} I_{i,n} $$ 
we obtain the desired assertion. 
\end{proof} 

\begin{corollary} 
For all $n>1$ we have $H^i(I_1,\dots, I_n)=0$ for $i=n-1,n$. 
\end{corollary}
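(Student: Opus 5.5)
Induction on $n$, using the long exact sequence of Corollary \ref{longex}. The base case $n=2$ is Example \ref{exn=2}: there $H^1(I_1,I_2)=H^2(I_1,I_2)=0$, so the statement holds for $i=n-1,n$.

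For the inductive step, take $n\ge 3$ and assume the claim for families of $n-1$ ideals in any commutative ring. We apply it to two families:
\begin{itemize}
\item $I_1,\dots,I_{n-1}$ in $S$, which gives $H^{n-2}_{n-1}=H^{n-1}_{n-1}=0$;
\item $J_1,\dots,J_{n-1}$ in $S/I_n$, which gives $H^{n-2}(J)=H^{n-1}(J)=0$.
\end{itemize}
Moreover $H^n_{n-1}=0$ and $H^n(J)=0$, since both complexes have length $n-1$ (their terms vanish in degree $n$).

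The long exact sequence contains the segments
$$H^{n-2}(J)\to H^{n-1}_n\to H^{n-1}_{n-1}
\quad\text{and}\quad
H^{n-1}(J)\to H^n_n\to H^n_{n-1}.$$
In each segment both outer terms are $0$, so $H^{n-1}_n=H^n_n=0$.

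Two points need care. First, the shift $(-1)$ in Lemma \ref{seqC} must put $H^{i-1}(J)$ just before $H^i_n$; the displayed sequence in Corollary \ref{longex} confirms this indexing. Second, for $n=3$ the index $i-1=1$ is positive, so we use the base case for $J$ in degree $1$ and never need the $T_n$ term. The only mild obstacle is this index bookkeeping.
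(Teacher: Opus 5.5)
Your proposal is correct and follows the paper's own route: induction on $n$ using the long exact sequence of Corollary \ref{longex}, with base case Example \ref{exn=2}. Your explicit check of the segments $H^{i-1}(J)\to H^i_n\to H^i_{n-1}$ for $i=n-1,n$ fills in what the paper leaves implicit. So does your remark that the induction hypothesis must be applied to $J_1,\dots,J_{n-1}$ over the ring $S/I_n$.
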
 
\begin{proof} The assertion follows immediately from \ref{longex}
by induction on $n$, the base case $n=2$ being \ref{exn=2}. 
\end{proof}

\begin{example}
\label{T3<>0}  
 When $n=2$ one has $H^i(I_1,I_2)=0$ for $i=1,2$. 
 When $n=3$ one has  $H^i(I_1,I_2,I_3)=0$ for $i=2,3$ and 
$$H^1(I_1,I_2,I_3)\simeq T_3=\frac{I_{1,3}\cap I_{2,3} }{I_1\cap I_2 +I_3}.$$
 In particular with $S=K[x,y]$, $I_1=(x), I_2=(x+y), I_3=(y)$ one has 
$H^1(I_1,I_2,I_3)=(x,y)/(x^2,y)\neq 0$. 
\end{example}  
 
The discussion above indicates that the vanishing of $H^1(I_1,\dots, I_n)$ implies the vanishing of $T_n$ which is, in turn, related to a distributive property (of sum over intersection) of the family of ideals $I_1,\dots,I_n$.  This observation leads to the following:  
 
 \begin{theorem} 
 \label{vanishing} 
 Suppose that for every $1<j\in [n]$ and every $B\subseteq  \{j+1,\dots, n\}$  
 one has: 
 $$\cap_{i=1}^j (I_i+I_B)=(\cap_{i=1}^j I_i)+I_B.$$
Then $H^i(I_1,\dots,I_n)=0$ for every $i>0$. 
\end{theorem}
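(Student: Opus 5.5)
We argue by induction on $n$, using the long exact sequence of Corollary \ref{longex}. For $n=1$ the complex is $0\to S\to S/I_1\to 0$, which has no cohomology in positive degrees. For $n=2$ the claim is Example \ref{exn=2}. Assume $n\ge 3$ and that the hypothesis holds for $I_1,\dots,I_n$. Corollary \ref{longex} shows that
$$H^i(I_1,\dots,I_n)=0 \quad\mbox{for all } i>0$$
once we have three things: $T_n=0$, $H^i(I_1,\dots,I_{n-1})=0$ for $i>0$, and $H^i(J_1,\dots,J_{n-1})=0$ for $i>0$. Indeed, $H^1_n$ then sits between $T_n$ and $H^1_{n-1}$, and each $H^{i+1}_n$ sits between $H^i(J)$ and $H^{i+1}_{n-1}$.

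First, $T_n=0$. Take $j=n-1$ and $B=\{n\}$ in the hypothesis. This gives $\cap_{i=1}^{n-1}(I_i+I_n)=(\cap_{i=1}^{n-1}I_i)+I_n$, which is exactly the statement that the numerator and denominator of $T_n$ coincide. (If $n-1=1$ the identity is trivial.)

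Second, the family $I_1,\dots,I_{n-1}$ satisfies the hypothesis of the theorem. Its hypothesis asks for the identity for $1<j\le n-1$ and $B\subseteq\{j+1,\dots,n-1\}$. These pairs form a subset of the pairs allowed for the original family, so induction gives $H^i_{n-1}=0$ for $i>0$.

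Third, the family $J_1,\dots,J_{n-1}$ in $S/I_n$ satisfies the hypothesis. This is the step needing care. For $1<j\le n-1$ and $B\subseteq\{j+1,\dots,n-1\}$, set $B'=B\cup\{n\}\subseteq\{j+1,\dots,n\}$. Then
$$J_i+J_B=(I_i+I_{B'})/I_n.$$
Intersections of ideals containing $I_n$ commute with passing to $S/I_n$, so
$$\cap_{i=1}^j (J_i+J_B)=\Big(\cap_{i=1}^j (I_i+I_{B'})\Big)/I_n.$$
By the hypothesis applied to $B'$, this equals $\big((\cap_{i\le j}I_i)+I_{B'}\big)/I_n$.

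On the other side we need $(\cap_{i\le j}J_i)+J_B$. Here we must compute $\cap_{i\le j}J_i=(\cap_{i\le j}(I_i+I_n))/I_n$, and we want it to equal $((\cap_{i\le j} I_i)+I_n)/I_n$. This is the hypothesis with $B=\{n\}$, which is allowed since $n>j$. Adding $J_B=(I_B+I_n)/I_n$ then gives $\big((\cap_{i\le j}I_i)+I_{B'}\big)/I_n$, matching the other side.

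So the $J$-family satisfies the hypothesis, and induction applied to the ring $S/I_n$ with $n-1$ ideals gives $H^i(J)=0$ for $i>0$. The twist $(-1)$ in Lemma \ref{seqC} only shifts indices and has already been accounted for in Corollary \ref{longex}.

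The only real obstacle is this bookkeeping for the $J$-family: checking that every distributivity instance needed over $S/I_n$ comes from an allowed instance over $S$. The key observations are that adjoining $n$ to $B$ keeps $B'$ inside $\{j+1,\dots,n\}$, and that the intersection of the $J_i$ themselves needs the $B=\{n\}$ instance. Everything else is formal from the long exact sequence.
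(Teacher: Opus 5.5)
Your proposal is correct and follows the paper's proof essentially step for step. It argues by induction on $n$ via the long exact sequence of Corollary~\ref{longex}, gets $T_n=0$ from the instance $j=n-1$, $B=\{n\}$, and transfers the hypothesis to $J_1,\dots,J_{n-1}$ by passing to $B\cup\{n\}$. The only difference is cosmetic: the paper verifies the $J$-identity using just the $B\cup\{n\}$ instance plus the trivial inclusion $\cap_{i\le j} I_i\subseteq\cap_{i\le j}(I_i+I_n)$, whereas you also invoke the $B=\{n\}$ instance to compute $\cap_{i\le j}J_i$ exactly, which is equally valid.
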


\begin{proof} By induction on $n$. The case $n=2$ is trivial. For $n>2$ we use the notations of Corollary \ref{longex}. By induction we have  that $H_{n-1}^i=0$ for all $i>0$. Now we claim   that the family $J_1,\dots, J_{n-1}$ satisfies the assumption of Theorem \ref{vanishing}  as well. To this end we have to prove that  $1<j\in [n-1]$ and every $B\subset \{j+1,\dots, n-1\}$  one has 
$\cap_{i=1}^j (J_i+J_B)=(\cap_{i=1}^j J_i)+J_B$. Equivalently, one has to check  that 
$$\cap_{i=1}^j \big(I_i+I_{B\cup \{n\}}\big)=\big(\cap_{i=1}^j (I_i+I_n)\big)+I_{B\cup \{n\}}.$$
Then inclusion $\supseteq $ is obvious. For the reverse inclusion one observes that, by assumption, 
$$\cap_{i=1}^j \big(I_i+I_{B\cup \{n\}}\big)=\big(\cap_{i=1}^j I_i\big)+I_{B\cup \{n\}}$$
and the left hand side is clearly contained in $(\cap_{i=1}^j I_i+I_n)+I_{B\cup \{n\}}$. Hence by induction we have also that $H^i(J)=0$ for all $i>0$. Finally one observes that   the assumption with $j=n-1$ and $B=\{n\}$ actually gives  that $T_n=0$. Now it follows from the long exact sequence of Corollary \ref{longex} that $H^i_n=0$ for all $i>0$. 
\end{proof} 

\begin{definition} 
Let $S$ be a $K$-algebra and $B$ be a $K$-basis of $S$.  We say that an ideal $I$ of $S$ is $B$-compatible if $B\cap I$ is a $K$-basis of $I$. 
\end{definition} 

\begin{remark} The set of $B$-compatible ideals is closed under sum and intersection. Furthermore the sum is distributive over the intersection for $B$-compatible ideals $I,J,K$: 
$$(I+J)\cap K=(I\cap K)+(J\cap K).$$ 
In particular, for $B$-compatible ideals $I_1,I_2,\dots, I_n,J$ of $S$:
$$\cap_{i=1}^n(I_i+J)=\left(\cap_{i=1}^n I_i \right)+J$$
\end{remark} 

 \begin{corollary} 
 \label{vanishing2} Let $S$ be a $K$-algebra and $B$ be a $K$-basis of $S$.  
Let $I_1,\dots, I_n$ be ideals of $S$ that are $B$-compatible.  Then $H^i(I_1,\dots, I_n)=0$ for every $i>0$. 
\end{corollary}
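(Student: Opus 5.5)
The plan is to deduce the statement directly from Theorem \ref{vanishing}, by checking its hypothesis with the Remark on $B$-compatible ideals. Fix $1<j\in[n]$ and $B\subseteq\{j+1,\dots,n\}$; to avoid a clash of notation, write $\mathcal B$ for the $K$-basis of $S$. We must verify
$$\cap_{i=1}^j (I_i+I_B)=\Big(\cap_{i=1}^j I_i\Big)+I_B .$$
Since the set of $\mathcal B$-compatible ideals is closed under sums, $I_B=\sum_{b\in B}I_b$ is $\mathcal B$-compatible. (If $B=\emptyset$ then $I_B=0$, which is trivially compatible, and the identity is obvious anyway.) The last displayed formula of the Remark, applied to $I_1,\dots,I_j$ and $J=I_B$, then gives exactly the required equality. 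Theorem \ref{vanishing} now yields $H^i(I_1,\dots,I_n)=0$ for all $i>0$.

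The one point deserving care is the Remark itself, which is stated without proof. I would justify it as follows. For a $\mathcal B$-compatible ideal $I$, an element $f\in S$ lies in $I$ if and only if every basis element of $\mathcal B$ in the support of $f$ lies in $I$. Granting this, $\mathcal B\cap(I+J)=(\mathcal B\cap I)\cup(\mathcal B\cap J)$ and $\mathcal B\cap(I\cap J)=(\mathcal B\cap I)\cap(\mathcal B\cap J)$, so both $I+J$ and $I\cap J$ are spanned by their basis elements. Distributivity follows: an element of $(I+J)\cap K$ has support in $\big((\mathcal B\cap I)\cup(\mathcal B\cap J)\big)\cap K$, hence lies in $(I\cap K)+(J\cap K)$. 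The formula $\cap_i(I_i+J)=(\cap_i I_i)+J$ follows similarly, since a basis element lies in every $I_i+J$ exactly when it lies in $J$ or in every $I_i$.

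I expect no real obstacle: the support argument is routine linear algebra over the basis $\mathcal B$, and the corollary is a direct application of Theorem \ref{vanishing}.
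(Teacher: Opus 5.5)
Your proposal is correct and follows essentially the paper's own proof: you verify the hypothesis of Theorem~\ref{vanishing} through the Remark on $B$-compatible ideals ($I_B$ is compatible by closure under sums, then apply the formula $\cap_i(I_i+J)=(\cap_i I_i)+J$ with $J=I_B$). Your support-based justification of that Remark, which the paper states without proof, is also correct.
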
  

\begin{proof}  Under the current assumptions we have that   $\cap_{i=1}^j (I_i+I_B)=(\cap_{i=1}^j I_i)+I_B$ holds  and the conclusion follows from Theorem \ref{vanishing}.  
 \end{proof} 

Monomial ideals in polynomial rings or, more generally, in toric rings are compatible with the monomial basis. Hence we have: 
\begin{theorem} \label{thm:monomial}
\label{monomialcase} 
Let $S$ be a polynomial ring over a field $K$ or a toric subring of it.  
Let $I_1,\dots, I_n$ be monomial ideals in $S$. Then $H^i(I_1,\dots, I_n)=0$ for every $i>0$.  
\end{theorem}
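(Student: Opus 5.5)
The plan is to deduce the statement directly from Corollary \ref{vanishing2}, taking as $B$ the set of monomials of $S$. The only point to check is that monomial ideals are $B$-compatible in the sense of the Definition.

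First, $B$ is a $K$-basis of $S$. If $S=K[x_1,\dots,x_r]$ this is the set of all monomials. If $S$ is a toric subring, generated by finitely many monomials, then $S$ is spanned by the monomials it contains, and these are linearly independent in the ambient polynomial ring. In both cases $B$ is the set of monomials lying in $S$, and it is a $K$-basis of $S$.

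Next, let $I$ be a monomial ideal of $S$, generated by monomials $m_1,\dots,m_s$. Every element of $I$ has the form $\sum f_jm_j$ with $f_j\in S$. Expanding each $f_j$ in the basis $B$ shows that $I$ is spanned over $K$ by monomials of the form $b\,m_j$ with $b\in B$, and each such product is again a monomial of $S$ lying in $I$. Hence $B\cap I$ spans $I$. Being a subset of a basis, it is also linearly independent, so $B\cap I$ is a $K$-basis of $I$ and $I$ is $B$-compatible.

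Applying Corollary \ref{vanishing2} to $I_1,\dots,I_n$ then gives $H^i(I_1,\dots,I_n)=0$ for all $i>0$. There is no genuine obstacle. The one point needing care is the toric case, where one must note that $B$ consists only of the monomials belonging to $S$, and that $S$-multiples of the monomial generators stay in the span of $B\cap I$ because $S$ is closed under multiplication.
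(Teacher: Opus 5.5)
Your proposal is correct and follows the paper's route exactly: the paper also deduces the theorem from Corollary~\ref{vanishing2}, noting that monomial ideals in a polynomial ring or a toric ring are compatible with the monomial basis. Your explicit check of that compatibility fills in a detail the paper leaves implicit, including the toric-case point that $B$ consists of the monomials lying in $S$ and that $S$ is closed under multiplication.
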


\section{The Hilbert polynomial of linearly general subspace arrangements} 
 
 Let $S=K[x_1,\dots,x_r]$ and $\VV=V_1,\dots, V_n$   be a subspace arrangement of $S_1$. Denote by  $I_i$ the ideal of $S$ generated by $V_i$ and by $I_{\VV}$ the intersection $I_1\cap I_2\cap \dots \cap I_n$.   For a subset $A\subseteq  [n]$ we set $V_A=\sum_{i\in A} V_i$, $d_A=\dim V_A$ and $c_A= \sum_{i\in A} \dim_K V_i$. 
 We set 
  $$P_{\VV}(x)=\sum_A (-1)^{|A|} { x+r-d_A -1 \choose r-d_A -1}$$ 
  where the sum is over all the $A\subseteq [n]$ (including the empty set) with $d_A<r$. 
  
Recall that the arrangement $\VV$ is said to be linearly general if for every $A\subseteq  [n]$ such that $d_A<r$ one has  $d_A=c_A$. This is equivalent to saying the dual arrangement $\mathbb{V}(\VV)$ of $\mathbb{P}^{r-1}$ is transversal in the sense of \cite{Derksen}.
 
 Derksen proved in \cite{Derksen} that $P_{\VV}(x)$ is the Hilbert polynomial of $I_{\VV}$ provided $\VV$ is linearly general. We will reprove and generalize this result. To this end we generalize the definition of linearly general  as follows. 
 
\begin{definition} Let $c\in \NN$. We say that $\VV$ is linearly general in codimension $c$ if for every $A\subseteq [n]$ such that $d_A<r-c$ one has  $d_A=c_A$. 
\end{definition} 
 
 We have:
 
\begin{proposition}
\label{smalldim}  Assume  $\VV$ is linearly general in codimension $c$. Then $H^i(I_1,\dots,I_n)$ has Krull dimension $\leq c$ for all $i>0$. 
\end{proposition}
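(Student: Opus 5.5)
Our plan is to localize: it suffices to show that $H^i(I_1,\dots,I_n)_P=0$ for every prime $P$ of $S$ with $\dim S/P>c$. Since the complex $\C(I_1,\dots,I_n)$ consists of finitely generated $S$-modules, each $H^i$ is finitely generated, and its Krull dimension is the maximum of $\dim S/P$ over $P\in\supp H^i$. By Remark \ref{easy}(2) we have $H^i(I_1,\dots,I_n)_P=H^i(I_1S_P,\dots,I_nS_P)$. By Remark \ref{easy}(1), every index $i$ with $I_i\not\subseteq P$ can be dropped, because then $I_iS_P=S_P$. So we may assume $P\supseteq I_i$ for all $i$ in some set $A\subseteq[n]$, and that the localized complex is $\C(I_iS_P : i\in A)$.

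Next we bound $d_A$. For $i\in A$ we have $I_A\subseteq P$, so $\dim S/P\le \dim S/I_A=r-d_A$. Hence $\dim S/P>c$ forces $d_A<r-c$, and linear generality in codimension $c$ gives $d_A=c_A$. This means the subspaces $V_i$, $i\in A$, are linearly independent: their sum is direct. After a linear change of coordinates we can choose a basis of $S_1$ containing bases of each $V_i$, $i\in A$, as disjoint subsets of the variables. In these coordinates the ideals $I_i$, $i\in A$, are generated by disjoint sets of variables, so they are monomial ideals.

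By Theorem \ref{monomialcase}, $H^j(I_i : i\in A)=0$ for all $j>0$ over $S$. Localization is exact and commutes with the construction (Remark \ref{easy}(2)), so $H^j(I_iS_P : i\in A)=H^j(I_i:i\in A)_P=0$. Thus $P\notin\supp H^j(I_1,\dots,I_n)$ for $j>0$, which gives the bound $\dim H^j\le c$.

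The main point to get right is the reduction in the first step. Dropping the indices with $I_i\not\subseteq P$ happens after localization, so we apply Remark \ref{easy}(1) to the ring $S_P$. We then have to compare with the complex of the subfamily over $S$ localized at $P$; this comparison is immediate from Remark \ref{easy}(2). A small degenerate case: when $A=\emptyset$ or $|A|=1$, the complex is $S_P$ alone or $S_P\to S_P/I_iS_P$, and the positive cohomology vanishes trivially.
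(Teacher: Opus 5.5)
Your proof is correct and follows essentially the same route as the paper. Both localize at a prime $P$ of height $<r-c$ and discard the indices with $I_j\not\subseteq P$ via Remark \ref{easy}. Both then use $d_A< r-c$ to get $d_A=c_A$, change coordinates so the remaining ideals are generated by disjoint sets of variables, and invoke Theorem \ref{monomialcase}. Your inequality $\dim S/P\le \dim S/I_A=r-d_A$ is just the complementary form of the paper's $d_A\le \codim P$.
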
 
  
  \begin{proof} Let $P$ be a prime ideal of $S$ of codimension $<r-c$. We have to prove that $H^i(I_1,\dots,I_n)_P=0$ for $i>0$. Let 
  $$A=\{ j\in [n] : I_j\subseteq P\}.$$ Then by Remark \ref{easy} $\C(I_1,\dots,I_n)_P=\C(I_j : j\in A)_P$ so that $H^i(I_1,\dots,I_n)_P=H^i(I_j : j\in A)_P$. Now $V_A \subset P$ by construction so that $d_A \leq \codim P<r-c$. Hence, by assumption, $d_A=c_A$. But then we can choose a basis of  $S_1$ such that each $V_j$ with $j\in A$ is generated by a subset of the basis. In other words, up to a change of coordinates we may assume that the ideals $I_j$ with $j\in A$ are generated by (disjoint sets of) variables. In particular, they are monomial ideals and hence,  by Theorem \ref{monomialcase}, $H^i(I_j : j\in A)=0$ for all $i>0$. And this implies that $H^i(I_j : j\in A)_P=0$ for all $i>0$. 
  \end{proof}

As a corollary we have: 
  
  \begin{theorem} 
  Assume  $\VV$ is  linearly general in codimension $c$.
  Then the Hilbert polynomial of $I_{\VV}$ coincides with $P_{\VV}(x)$   in all terms of degree $\geq c$. 
  \end{theorem}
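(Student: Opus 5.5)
We compare Hilbert functions through the Euler characteristic of the complex $\C=\C(I_1,\dots,I_n)$. All ideals $I_A$ are homogeneous, so $\C$ is a finite complex of finitely generated graded $S$-modules with degree-preserving differential (multiplication by $L$ does not change the internal degree). The $i$-th term is $\bigoplus_{|A|=i}S/I_A$. Taking the alternating sum of Hilbert functions in each degree gives
$$\sum_{i\ge 0}(-1)^i \HF(H^i(I_1,\dots,I_n),x)=\sum_{A\subseteq[n]}(-1)^{|A|}\HF(S/I_A,x).$$
Here $H^0=I_\VV$.

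Next we compute the right-hand side. Since $I_A$ is generated by the space $V_A$ of linear forms, of dimension $d_A$, the ring $S/I_A$ is a polynomial ring in $r-d_A$ variables. Hence
$$\HF(S/I_A,x)=\binom{x+r-d_A-1}{r-d_A-1}\quad\text{for } d_A<r.$$
When $d_A=r$, the module $S/I_A=K$ is concentrated in degree $0$ and contributes nothing for $x>0$. So for $x\gg0$ the right-hand side equals $P_\VV(x)$, and this identity holds as polynomials.

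Then
$$\HP(I_\VV,x)=P_\VV(x)-\sum_{i>0}(-1)^i\HP(H^i(I_1,\dots,I_n),x).$$
By Proposition \ref{smalldim}, each $H^i$ with $i>0$ is a finitely generated graded module of Krull dimension $\le c$. Its Hilbert polynomial therefore has degree $\le c-1$ (it is the zero polynomial if the dimension is $0$). The correction terms thus affect only coefficients of degree $<c$, so $\HP(I_\VV)$ and $P_\VV$ agree in all terms of degree $\ge c$.

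There is no serious obstacle. The only points requiring care are the grading conventions: the shift in Lemma \ref{seqC} is only a cohomological shift, not an internal one, and the summands with $d_A=r$ must be discarded. Both are routine.
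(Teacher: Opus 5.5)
Your proposal is correct and follows the paper's proof essentially verbatim. Like the paper, you take the Euler characteristic of $\C(I_1,\dots,I_n)$ and identify $\sum_i(-1)^iP_{C_i}(x)$ with $P_\VV(x)$ after discarding the summands with $d_A=r$. You then use Proposition \ref{smalldim} to confine the contributions of $H^i$, $i>0$, to terms of degree $<c$.
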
 
  \begin{proof} Set $P_M(x)$ the Hilbert polynomial of a graded $S$-module $M$. Consider the complex 
 $$\C(I_1,\dots,I_n): 0\to C_0\to C_1 \to C_2 \to \dots \to  C_n \to 0,$$ 
 and set $H^i=H^i(I_1, \dots, I_n)$ so that $I_{\VV}=H^0$. 
 Then  
 $$\sum_{i=0}^n  (-1)^iP_{C_i}(x)=\sum_{i=0}^n  (-1)^iP_{H^i}(x).$$
 By \ref{smalldim}, $\dim H^i\leq c$ for all $i>0$ and so $\deg P_{H^i}(x)<c$. 
  Hence 
 $$\sum_{i=0}^n  (-1)^iP_{C^i}(x)=P_{H^0}(x) \mbox{  in all terms of degree } \geq c.$$ 
 On the other hand 
 $$\sum_{i=0}^n  (-1)^iP_{C_i}(x)=\sum_{A\subseteq [n]}  (-1)^{|A|} { x+r-d_A -1 \choose r-d_A -1}.$$
 The latter sum can be restricted to the summands with $d_A<r$ because the contribution of the summands with $d_A =r$ vanishes, that is, 
 $$\sum_{i=0}^n  (-1)^iP_{C_i}(x)=P_{\VV}(x).$$
 
 Therefore we may conclude that 
 $$P_{\VV}(x)=P_{H^0}(x) \mbox{  in all terms of degree } \geq c$$ 
 and this completes the proof. 
 \end{proof}

  \begin{example} 
  In $S=\QQ[a,b,c,d,e]$ we consider $I_1=(a,b), I_2=(c,d), I_3=(a + c, e), I_4=(b+e, c+e)$. 
  This subspace arrangement is linearly general in codimension $1$ but not linearly general because 
  $I_1+I_3+I_4=(a,b,c,e)$. 
  One has  
  $$P_{\VV}(x)= \frac{1}{24}x^4 + \frac{5}{12}x^3 - \frac{13}{24}x^2 - \frac{47}{12}x + 2,$$
  while the Hilbert polynomial of $I_{\VV}$ is 
  $$P_{I_{\VV}}(x)=\frac{1}{24}x^4 + \frac{5}{12}x^3 - \frac{13}{24}x^2 - \frac{47}{12}x  + 4.$$
Also $P_{H^1}(x)=2$ and $H^i=0$ for $i=2,3,4$. 
  All computations done with Macaulay 2 \cite{M2}. 
       \end{example} 
    
Our next goal is to identify the local cohomology modules of the ideal $I_{\VV}$ when $\VV$ is linearly general and $\dim V_i=2$ for all $i$. The next section contains the results that would be useful to reach this goal. 
   
 \section{Complexes with separated cohomology} 
 Let $S=K[x_1,\dots, x_r]$. Let $s\in \NN$ with $s>0$. 
 Suppose we have a complex of finitely generated graded $S$-modules 
$$\MM: 0\to M_0 \stackrel{f_0}\longrightarrow  M_1\stackrel{f_1}\longrightarrow M_2 \stackrel{f_2}\longrightarrow  \dots  \to M_{s} \to 0.$$ 
 Let $H^i(\MM)$ be the $i$-cohomology of $\MM$ at $M_i$ and set 
$$N=H^0(\MM).$$
Splitting the complex into short exact sequences, a simple inductive argument gives the following inequality: 

$$\reg(N)\leq \max
 \left\{ 
 \begin{array}{lll}
  \reg(H^i(\MM))+i+1 &:&  i=1,\dots, s, \\
  \reg(M_i)+i  &:& i=0,\dots, s.
 \end{array} 
 \right.
 $$

We will show that under some assumptions the inequality above becomes an equality. 

Assume that: 

\begin{assumption}
\label{sepcoho} 
\begin{itemize}  
\item[(1)] $H^i(\MM)$ has Krull dimension $0$  for all $i=1,\dots, s$. 
\item[(2)] (even case)   $M_i$ is Cohen-Macaulay of dimension  $2(s-i+1)$ for all $i=0,\dots, s$ \\ or  
\item[(3)] (odd case) $M_i$ is Cohen-Macaulay of dimension  $2(s-i+1)-1$ for all $i=0,\dots, s$.   
\end{itemize} 
\end{assumption} 

We will see that, under these assumptions, the $j$-th local cohomology module of $N$  will be either one of the cohomology modules of $\MM$ (when $j$ is small) or the local cohomology module of one of the $M_i$'s (when $j$ is large) with one small exception in the odd case.  The precise statements are below. 
The proofs are easy and follow by induction on $s$ and by splitting the complex into short exact sequences. We will  present them  in  details in the even case and then we will highlight the differences in the odd case. 

\begin{theorem}\label{thm:even}
\label{evencase} 
In the even case one has: 
$$H_\mm^{i}(N)=\left\{ 
\begin{array}{ll}
H^{i-1}(\MM) &  \mbox{ for } i=2,\dots, s+1 \\ 
H_\mm^{2i-2s-2}(M_{2s-i+2}) & \mbox{ for } i=s+2,\dots, 2s+2\\
 0 &  \mbox{  for  }  i<2 \mbox{ or } i>2s+2 
 \end{array}
 \right.
 $$
 \end{theorem}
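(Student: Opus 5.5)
The plan is to split $\MM$ into short exact sequences and induct on $s$. Set $Z_i=\Ker f_i$ and $B_i=\Image f_{i-1}\subseteq M_i$, so that $Z_0=N$, $H^i(\MM)=Z_i/B_i$ for $i\geq 1$, and $Z_s=M_s$. The complex gives short exact sequences
$$0\to Z_{i}\to M_i\to B_{i+1}\to 0,\qquad 0\to B_{i}\to Z_{i}\to H^{i}(\MM)\to 0 .$$
The basic tool is the long exact sequence of local cohomology $H^j_\mm(-)$, combined with two facts. First, $H^{i}(\MM)$ has finite length, so $H^0_\mm(H^i(\MM))=H^i(\MM)$ and $H^j_\mm(H^i(\MM))=0$ for $j>0$. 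Second, $M_i$ is Cohen--Macaulay of dimension $d_i=2(s-i+1)$, so $H^j_\mm(M_i)\neq 0$ only for $j=d_i$. The even-case hypothesis places the local cohomology of consecutive $M_i$ two steps apart, which is what keeps all the contributions separated.

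I would first prove, by descending induction on $i$, a description of the local cohomology of $Z_i$. The claim is that for $i\geq 1$ one has $H^0_\mm(Z_i)=H^1_\mm(Z_i)=0$, that $H^{j}_\mm(Z_i)=H^{i+j-1}(\MM)$ for $2\le j\le s-i+1$, and that for larger $j$ the module $H^j_\mm(Z_i)$ is either zero or equal to $H^{d_k}_\mm(M_k)$ for the unique $k\geq i$ with $d_k+(k-i)=j$. The second sequence, $0\to B_i\to Z_i\to H^i(\MM)\to 0$, gives $H^0_\mm(Z_i)\to H^i(\MM)\to H^1_\mm(B_i)\to H^1_\mm(Z_i)\to 0$ and $H^j_\mm(B_i)\cong H^j_\mm(Z_i)$ for $j\geq 2$. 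The first sequence, $0\to Z_i\to M_i\to B_{i+1}\to 0$, gives $H^{j}_\mm(B_{i+1})\cong H^{j+1}_\mm(Z_i)$ except near $j+1=d_i$, where $H^{d_i}_\mm(M_i)$ is inserted. Composing the two shifts the index by one at each step. The low-degree vanishing, depth $\geq 2$ for $Z_i$, holds because $Z_i\subseteq M_i$ and $M_i$ has positive depth; one then uses $H^0_\mm(B_{i+1})=0$ to get $H^1_\mm(Z_i)=0$.

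At $i=0$ the same argument gives $H^0_\mm(N)=H^1_\mm(N)=0$, and the chain of isomorphisms yields $H^i_\mm(N)\cong H^{i-1}(\MM)$ for $2\le i\le s+1$. In the upper range, $H^{d_k}_\mm(M_k)$ contributes to $H^{d_k+k}_\mm(N)$. With $i=d_k+k=2s-k+2$ we get $k=2s-i+2$ and $d_k=2i-2s-2$, exactly as stated, and $k=s,\dots,0$ corresponds to $i=s+2,\dots,2s+2$. Since $\dim N\le 2s+2$, everything vanishes beyond that.

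The main obstacle is the bookkeeping at the boundary indices. One must check that the degrees $j$ that receive contributions from $H^\bullet(\MM)$, namely $j\le s+1$ at level $N$, never overlap with those that receive contributions from the $M_k$, namely $j\ge s+2$. Without this separation the connecting maps could be nonzero and the answer would only be an extension. In the even case the top contribution of $H^\bullet(\MM)$ is $H^s(\MM)$ in degree $s+1$, while the lowest $M$-contribution is $M_s$, with $d_s=2$, in degree $s+2$. This gap of one is what makes every connecting map vanish for degree reasons, and I would verify it carefully at each inductive step, including the case $i=s$ where $Z_s=M_s$.
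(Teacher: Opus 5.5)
Your proposal is correct and is essentially the paper's proof. The paper inducts on $s$ using exactly your two sequences at $i=0$, namely $0\to N\to M_0\to \image f_0\to 0$ and $0\to \image f_0\to \Ker f_1\to H^1(\MM)\to 0$, and applies the induction hypothesis to the truncated complex $M_1\to\dots\to M_s$, whose $H^0$ is your $Z_1$; your descending induction on the $Z_i$ is that same argument unrolled. When writing it up, state explicitly that $H^{d_i-1}_\mm(B_{i+1})=H^{d_i}_\mm(B_{i+1})=0$ because $B_{i+1}\subseteq M_{i+1}$ has dimension at most $d_i-2$, which is the same dimension bound the paper uses to get $H^{2s+2}_\mm(N)=H^{2s+2}_\mm(M_0)$.
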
 
\begin{proof} 
Let us start to analyze the case  $s=1$. We have a complex
$$0\to M_0 \stackrel{f_0} \longrightarrow M_1 \to 0$$ 
with $M_0$ and $M_1$ Cohen-Macaulay modules of dimension $4$ and $2$ and the $1$-st cohomology is $0$-dimensional. We break the complex into two  exact sequences: 
$$(1) \quad 0\to N \longrightarrow M_0 \longrightarrow \image f_0 \to 0  \qquad 
(2) \quad 0\to \image f_0  \longrightarrow M_1 \longrightarrow H^1(\MM) \to 0 $$
Now $\image f_0\neq 0$ otherwise $M_1\simeq H^1(\MM)$ which is a contradiction and $\image f_0$  has dimension $\leq 2$.  Then 
$\depth \image f_0>0$ being $\image f_0\subseteq  M_1$. Therefore the long exact sequence of local cohomology modules associated to  (1) gives $H_\mm^0(N)=H_\mm^1(N)=0$ and 
$$
H_\mm^2(N)=H_\mm^1(\image f_0),\quad  H_\mm^3(N)=H_\mm^2(\image f_0), \quad
H_\mm^4(N)=H_\mm^4(M_0).$$

Now the long exact sequence of local cohomology modules associated to  (2) gives: 
$$H_\mm^1(\image f_0)=H^1(\MM), \quad H_\mm^2(\image f_0)=H_\mm^2(M_1).$$

Summing up  we have: 
$$H_\mm^i(N)=\left\{ 
\begin{array}{ll}
H^1(\MM) & \mbox{ for } i=2\\
 H^2_\mm(M_1) & \mbox{ for } i=3\\
 H^4_\mm(M_0) & \mbox{ for } i=4\\
 0 &  \mbox{ for  } i=0,1 \mbox{ and } i>4
 \end{array}
 \right.
 $$
 
Now we assume that $s>1$ and argue by induction on $s$. 

We break the complex into two  exact sequences
$$(3) \quad 0\to N \longrightarrow M_0 \longrightarrow \image f_0 \to 0  \qquad 
(4) \quad 0\to \image f_0  \longrightarrow \ker f_1  \longrightarrow H^1(\MM) \to 0 $$
and a complex: 
$$(5) \MM': 0\to M_1 \stackrel{f_1}\longrightarrow  M_2\stackrel{f_2}\longrightarrow M_3 \stackrel{f_3}\longrightarrow  \dots  \to M_{s} \to 0.$$ 
and clearly $\MM'$ satisfies the assumptions \ref{sepcoho}.   By induction on $s$ we know how to express the local cohomology modules of $\ker f_1$ in terms of $H^i(\MM')=H^{i+1}(\MM)$ with $i=1,\dots, s-1$ and $H_\mm^{2(s-j+1)}(M_j)$ with $j=1,\dots,s$. In particular, $\depth \ker f_1>1$. Now, the long exact sequence of local cohomology modules associated to (3) gives $H_\mm^0(N)=H_\mm^1(N)=0$  and 
$$ H_\mm^i(N)=H_\mm^{i-1}(\image f_0) \mbox{ for } i=2,\dots, 2s+2-1, \quad
H_\mm^{2s+2}(N)=H_\mm^{2s+2}(M_0),$$
where the last equality is because $\image f_0$ is a submodule of $M_1$ whose dimension is $2s$. Taking into consideration that $\depth \Ker f_1>1$, the long exact sequence of local cohomology modules associated to (4) gives: 
$$H^1_\mm(\image f_0)=H^1(\MM), \quad H^i_\mm(\image f_0)=H^i_\mm(\Ker f_1) \mbox{ for } i>0.$$
Collecting all the data and paying attention at the indices one obtains the desired expression for $H^i_\mm(N)$. \end{proof}

\begin{theorem}\label{thm:odd}
\label{oddcase} 
In the odd  case one has: 
$$H_\mm^{i}(N)=\left\{ 
\begin{array}{ll}
H^{i-1}(\MM) &  \mbox{ for } i=2,\dots, s \\ 
H_\mm^{2i-2s-1}(M_{2s-i+1}) & \mbox{ for } i=s+2,\dots, 2s+1\\
 0 &  \mbox{  for  }  i<2 \mbox{ or } i>2s+1 
 \end{array}
 \right.
 $$
 and $H^{s+1}_\mm(N)$ sits in the following short exact sequence: 
$$0\to H^s(\MM) \to H^{s+1}_\mm(N) \to H^1_\mm(M_s)\to 0.$$  
 \end{theorem}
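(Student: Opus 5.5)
We mimic the proof of Theorem \ref{evencase}: induction on $s$, splitting $\MM$ into short exact sequences, and reading off local cohomology from the long exact sequences. In the odd case $\dim M_i=2(s-i)+1$, so $\dim M_s=1$, $\dim M_{s-1}=3$, and in general $\dim M_0=2s+1$.

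\emph{Base case $s=1$.} Here $M_0$ and $M_1$ are Cohen--Macaulay of dimensions $3$ and $1$, and $H^1(\MM)$ has finite length. We use the sequences
$$(1)\ 0\to N\to M_0\to \image f_0\to 0,\qquad (2)\ 0\to\image f_0\to M_1\to H^1(\MM)\to 0 .$$
As before, $\image f_0\neq 0$, since otherwise $M_1\simeq H^1(\MM)$ would have dimension $0$. Also $\depth\image f_0\geq 1$ because $\image f_0\subseteq M_1$ and $\depth M_1=1$. Sequence (2) gives $H^0_\mm(\image f_0)=0$ and the exact sequence $0\to H^1(\MM)\to H^1_\mm(\image f_0)\to H^1_\mm(M_1)\to 0$; here $H^1_\mm(H^1(\MM))=0$ since $H^1(\MM)$ has dimension $0$, while $H^0_\mm(M_1)=0$. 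Sequence (1), together with $\depth M_0=3$, gives $H^0_\mm(N)=H^1_\mm(N)=0$ and $H^2_\mm(N)\simeq H^1_\mm(\image f_0)$. Since $\dim \image f_0\leq 1$, it also gives $H^3_\mm(N)=H^3_\mm(M_0)$. This is exactly the statement for $s=1$: the range $i=2,\dots,s$ is empty, $H^{s+1}_\mm(N)=H^2_\mm(N)$ is the extension, and for $i=3=2s+1$ we get $H^{2i-2s-1}_\mm(M_{2s-i+1})=H^3_\mm(M_0)$.

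\emph{Inductive step $s>1$.} We use
$$(3)\ 0\to N\to M_0\to\image f_0\to 0,\qquad (4)\ 0\to\image f_0\to\Ker f_1\to H^1(\MM)\to 0,$$
together with the truncated complex $\MM'$ starting at $M_1$. This complex satisfies the odd-case assumptions with $s-1$, since $\dim M_i=2((s-1)-(i-1))+1$. By induction $\Ker f_1=H^0(\MM')$ has $H^j_\mm=0$ for $j<2$, and its higher local cohomology is given by the formulas with indices shifted ($H^{j}(\MM')=H^{j+1}(\MM)$ and $M'_j=M_{j+1}$). In particular $\depth\Ker f_1\geq 2$, so (4) yields $H^0_\mm(\image f_0)=0$, $H^1_\mm(\image f_0)\simeq H^1(\MM)$, and $H^j_\mm(\image f_0)\simeq H^j_\mm(\Ker f_1)$ for $j\geq 2$. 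Since $\dim\image f_0\leq\dim M_1=2s-1$ and $M_0$ is Cohen--Macaulay of dimension $2s+1$, sequence (3) gives $H^0_\mm(N)=H^1_\mm(N)=0$, $H^i_\mm(N)\simeq H^{i-1}_\mm(\image f_0)$ for $2\leq i\leq 2s$, and $H^{2s+1}_\mm(N)=H^{2s+1}_\mm(M_0)$.

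Composing these isomorphisms gives $H^2_\mm(N)=H^1(\MM)$ and $H^i_\mm(N)=H^{i-1}_\mm(\Ker f_1)$ for $3\leq i\leq 2s$. We then substitute the inductive description of $\Ker f_1$, with $s-1$ in place of $s$. For $2\leq i-1\leq s-1$ it gives $H^{i-2}(\MM')=H^{i-1}(\MM)$. For $i-1=s$ it gives the extension $0\to H^{s-1}(\MM')\to H^s_\mm(\Ker f_1)\to H^1_\mm(M'_{s-1})\to 0$, and since $H^{s-1}(\MM')=H^s(\MM)$ and $M'_{s-1}=M_s$ this is the claimed sequence for $H^{s+1}_\mm(N)$. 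For $s+1\leq i-1\leq 2s-1$ it gives $H^{2(i-1)-2(s-1)-1}_\mm(M'_{2(s-1)-(i-1)+1})=H^{2i-2s-1}_\mm(M_{2s-i+1})$. The top case $i=2s+1$ is $H^{2s+1}_\mm(M_0)$, which again matches the formula, and vanishing for $i>2s+1$ holds since $\dim N\leq 2s+1$.

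The only genuinely new point compared with the even case is the extension in degree $s+1$. It originates in the base case, because $\depth M_1=1$ no longer kills $H^1_\mm(M_1)$, and it then propagates by the index shift; the rest is careful index bookkeeping.
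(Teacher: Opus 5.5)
Your proposal is correct and is essentially the paper's proof. You use the same base case via sequences (1) and (2), and the same inductive step via (3), (4) and the truncated complex $\MM'$, which the paper only sketches by referring back to the even case; your explicit index bookkeeping, including the correct range $j\geq 2$ for $H^j_\mm(\image f_0)\simeq H^j_\mm(\Ker f_1)$, fills in what the paper leaves to the reader.
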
 
 \begin{proof} 
Let us start to analyze the case  $s=1$. We have a complex
$$0\to M_0 \stackrel{f_0} \longrightarrow M_1 \to 0$$ 
with $M_0$ and $M_1$ Cohen-Macaulay modules of dimension $3$ and $1$ and the $1$-st cohomology is $0$-dimensional. We break the complex into two  exact sequences: 
$$(1) \quad 0\to N \longrightarrow M_0 \longrightarrow \image f_0 \to 0  \qquad  
(2) \quad 0\to \image f_0  \longrightarrow M_1 \longrightarrow H^1(\MM) \to 0 $$
Now $\image f_0\neq 0$ otherwise $M_1\simeq H^1(\MM)$ which is a contradiction and $\image f_0$  has dimension $\leq 1$.  Then 
$\depth \image f_0>0$ being $\image f_0\subseteq  M_1$. Therefore the long exact sequence of local cohomology modules associated to  (1) gives: 
$$H_\mm^0(N)=H_\mm^1(N)=0,  \quad
H_\mm^2(N)=H_\mm^1(\image f_0),\quad  H_\mm^3(N)=H_\mm^3(M_0).$$
Now the long exact sequence of local cohomology modules associated to  (2) gives a short exact sequence
 $$0\to H^1(\MM) \to H^1_\mm(\image f_0) \to H^1_\mm(M_1)\to 0.$$  
Summing up  we have: 
$$H_\mm^i(N)=\left\{ 
\begin{array}{ll}
 H^3_\mm(M_0) & \mbox{ for } i=3\\
 0 &  \mbox{ for  } i=0,1 \mbox{ and } i>3
 \end{array}
 \right.
 $$
 and $H^2_\mm(N)$ sits in the short exact sequence: 
 $$0\to H^1(\MM) \to H^2_\mm(N)\to H^1_\mm(M_1)\to 0$$
 This proves the assertion for $s=1$. The rest of the proof follows by using the argument already employed in the even case. 
\end{proof}

 It follows that,  both in the even and odd case,  one has: 
 
 \begin{corollary} 
 \label{regul}
 $$\reg(N)= \max
 \left\{ 
 \begin{array}{lll}
  \reg(H^i(\MM))+i+1 &:&  i=1,\dots, s, \\
  \reg(M_i)+i  &:& i=0,\dots, s.
 \end{array} 
 \right.
 $$
\end{corollary}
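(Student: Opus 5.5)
We compute $\reg(N)$ through local cohomology, using $\reg(N)=\max_j\{a_j(N)+j\}$, where $a_j(X)=\max\{d: H^j_\mm(X)_d\neq 0\}$ and $a_j(X)=-\infty$ if $H^j_\mm(X)=0$. The inequality ``$\leq$'' was already obtained above by splitting $\MM$ into short exact sequences, so only ``$\geq$'' needs proof. For that it suffices to show that every term on the right-hand side equals $a_j(N)+j$ for a suitable $j$. We use Theorem \ref{evencase} in the even case and Theorem \ref{oddcase} in the odd case.

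\textbf{The terms $\reg(H^i(\MM))+i+1$.} Since $H^i(\MM)$ has Krull dimension $0$, we have $H^i_\mm(H^i(\MM))$ nonzero only in local cohomological degree $0$, and $\reg(H^i(\MM))=a_0(H^i(\MM))$ is its top nonzero degree. In the even case Theorem \ref{evencase} gives $H^{i+1}_\mm(N)=H^i(\MM)$ for $i=1,\dots,s$. Hence $a_{i+1}(N)+i+1=\reg(H^i(\MM))+i+1$. In the odd case the same identification holds for $i\le s-1$. For $i=s$, the injection $H^s(\MM)\hookrightarrow H^{s+1}_\mm(N)$ gives $a_{s+1}(N)\ge \reg(H^s(\MM))$, which is all we need.

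\textbf{The terms $\reg(M_i)+i$, even case.} $M_i$ is Cohen--Macaulay of dimension $2(s-i+1)$, so $\reg(M_i)=a_{2(s-i+1)}(M_i)+2(s-i+1)$. Theorem \ref{evencase} with $j=2s-i+2$ (which ranges over $s+2,\dots,2s+2$ as $i$ ranges over $0,\dots,s$) gives $H^j_\mm(N)=H^{2j-2s-2}_\mm(M_{2s-j+2})=H^{2(s-i+1)}_\mm(M_i)$. So $a_j(N)+j=a_{2(s-i+1)}(M_i)+2s-i+2=\reg(M_i)+i$, as wanted.

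\textbf{The terms $\reg(M_i)+i$, odd case.} Now $\dim M_i=2(s-i+1)-1$. Take $j=2s-i+1$; for $i=0,\dots,s-1$ this lies in $s+2,\dots,2s+1$, and the theorem gives $H^j_\mm(N)=H^{2(s-i+1)-1}_\mm(M_i)$. Then
\[a_j(N)+j=a_{\dim M_i}(M_i)+\dim M_i+i=\reg(M_i)+i.\]
For $i=s$ we have $\dim M_s=1$, and the surjection $H^{s+1}_\mm(N)\to H^1_\mm(M_s)$ gives $a_{s+1}(N)\ge a_1(M_s)$. Therefore $\reg(N)\ge a_1(M_s)+1+s=\reg(M_s)+s$.

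The only delicate points are the index bookkeeping and the exceptional degree $s+1$ in the odd case. That degree is handled by the inequalities coming from the injection and the surjection, and these suffice because we only need ``$\geq$''. We also tacitly assume each $M_i\neq 0$; if some $M_i=0$, its term contributes $-\infty$ and causes no harm.
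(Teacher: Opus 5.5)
Your proposal is correct and is essentially the argument the paper intends: the paper gives no separate proof and simply says the corollary follows from Theorems \ref{evencase} and \ref{oddcase}. You read off $a_j(N)+j$ from those identifications of $H^j_\mm(N)$ with the index bookkeeping done correctly, handle the exceptional degree $s+1$ in the odd case via the injection and surjection of the short exact sequence, and combine this with the ``$\leq$'' inequality already obtained by splitting $\MM$ into short exact sequences.
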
 

\section{Local cohomology for subspace arrangements of dimension $2$} 
In this section we will use the results obtained so far to describe the local cohomology modules of  $I_{\VV}$ for a linearly general subspace arrangement $\VV$ of dimension $2$ (recall notations and definitions from section 2).
 
 \begin{assumption} \label{ass:LinearlyGeneralDim=2}
 In this section we assume that  $\dim V_i=2$ for every $i$ and that the arrangement $\VV$ is linearly  general. 
 \end{assumption} 
 
 Consider the complex 
 $$\C=\C(I_1,\dots,I_n): \quad 0\to C_0\to C_1 \to C_2 \to \dots \to  C_n \to 0$$ 
 and set $H^i=H^i(I_1, \dots, I_n)=H^i(\C)$ so that  $I_{\VV}=H^0$. 

We know by Proposition \ref{smalldim} that $\dim H^i=0$ for all $i>0$. 

\begin{remark} 
By assumption we have $\dim C_i=\max\{ r-2i, 0\}$ and $C_i$ is the  direct sum of $\binom{n}{i}$ polynomial rings of dimension $\dim C_i$ whose local cohomology modules are hence well understood. In particular, $\reg(C_i)=0$ for all $i$.  
\end{remark} 

The discussion will be divided in two cases. The first and easy case is the following: 

\begin{theorem}
\label{2n leq r}
Assume $r\geq 2n$. Then $H^{r-i}_\mm(I_{\VV})=H^{r-2i}_\mm(C_i)$ for $i=0,\dots, n$ and $H^{r-i}_\mm(I_{\VV})=0$ for $i>n$. 
\end{theorem}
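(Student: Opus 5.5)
When $r\geq 2n$, linear generality applies to $A=[n]$ as well. Indeed $c_{[n]}=2n\le r$, so either $d_{[n]}<r$, which forces $d_{[n]}=c_{[n]}=2n$ and hence $2n<r$, or $d_{[n]}=r$, in which case $d_{[n]}\le c_{[n]}=2n\le r$ gives $d_{[n]}=2n=r$. In both cases $V_1,\dots,V_n$ are independent subspaces with $\dim V_{[n]}=2n$.

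After a linear change of coordinates we may therefore assume that each $I_i$ is generated by two variables, with disjoint sets of variables for distinct $i$. The ideals are then monomial, and Theorem \ref{monomialcase} gives $H^i(\C)=0$ for all $i>0$. So $\C$ is a resolution of $I_{\VV}=H^0$:
$$0\to I_{\VV}\to C_0\to C_1\to\dots\to C_n\to 0,$$
where $C_i$ is a direct sum of $\binom{n}{i}$ polynomial rings of dimension $r-2i\ge 0$, each Cohen--Macaulay. In the case $r=2n$ the last term $C_n=S/\mm$ has dimension $0$, which is still Cohen--Macaulay.

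Next we compute local cohomology by splitting $\C$ into short exact sequences. Set $Z_i=\Ker(C_i\to C_{i+1})$, so that $Z_0=I_{\VV}$, $Z_n=C_n$, and we have exact sequences
$$0\to Z_i\to C_i\to Z_{i+1}\to 0.$$
We prove by descending induction on $i$ the claim that $H^j_\mm(Z_i)=H^{j-i'}_\mm(C_{i+i'})$ whenever $j=r-i-i'$ with $0\le i'\le n-i$, and $H^j_\mm(Z_i)=0$ for every other $j$. Equivalently, the nonzero local cohomology of $Z_i$ sits in the degrees $r-2i, r-2i-1,\dots,r-i-n$, and the module in degree $r-i-i'$ is $H^{r-2(i+i')}_\mm(C_{i+i'})$. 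The base case $i=n$ is immediate since $Z_n=C_n$.

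For the inductive step, the only nonzero local cohomology of $C_i$ is in degree $r-2i$. The long exact sequence of the short exact sequence above then gives
$$H^j_\mm(Z_i)\cong H^{j-1}_\mm(Z_{i+1})\quad\mbox{for } j\neq r-2i,\ r-2i+1.$$
The main obstacle is checking the two critical degrees $j=r-2i$ and $j=r-2i+1$, where the connecting maps interact with $H^{r-2i}_\mm(C_i)$. By induction the top nonzero degree of $H^\bullet_\mm(Z_{i+1})$ is $r-2(i+1)=r-2i-2$, so $H^{r-2i-1}_\mm(Z_{i+1})=H^{r-2i}_\mm(Z_{i+1})=0$. The sequence
$$H^{r-2i-1}_\mm(Z_{i+1})\to H^{r-2i}_\mm(Z_i)\to H^{r-2i}_\mm(C_i)\to H^{r-2i}_\mm(Z_{i+1})$$
therefore yields $H^{r-2i}_\mm(Z_i)\cong H^{r-2i}_\mm(C_i)$, and the analogous segment one degree higher yields $H^{r-2i+1}_\mm(Z_i)=0$. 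Combining these with the shift isomorphisms gives the claim for $Z_i$.

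Taking $i=0$, we get $H^{r-i'}_\mm(I_{\VV})=H^{r-2i'}_\mm(C_{i'})$ for $i'=0,\dots,n$ and zero otherwise; in particular $H^{r-i}_\mm(I_{\VV})=0$ for $i>n$. This is exactly the statement of Theorem \ref{2n leq r}.
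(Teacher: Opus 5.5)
Your proof is correct and is essentially the paper's argument: linear generality gives independent $V_i$, so the $I_i$ become monomial and Theorem \ref{monomialcase} kills the higher cohomology, and then you split $\C$ into short exact sequences (you also supply the justification for the change of coordinates, which the paper leaves implicit). One indexing slip: your inductive claim should read $j=r-2i-i'$, not $j=r-i-i'$, so that the module in degree $r-2i-i'$ is $H^{r-2(i+i')}_\mm(C_{i+i'})$; the two versions agree only at $i=0$, and your inductive step already uses the correct degrees, so the conclusion is unaffected.
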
 
\begin{proof} 
In this case $H^i=0$ for all $i>0$ by Theorem \ref{thm:monomial} because, after a suitable change of coordinates, $I_1,\dots, I_n$ are generated by variables. Arguing by induction on $n$ and splitting the complex $\C(I_1,\dots,I_n)$ into short exact sequences we get the desired result. 
\end{proof} 

From now on we assume that $r<2n$. Then we set 
 $s=\max\{ i : r-2i>0\}$,  that is, 
 $$ s=\lfloor  (r-1)/2 \rfloor$$
 and truncate the complex $\C(I_1,\dots,I_n)$  on the right at position $s$: 
$$\hat\C(I_1,\dots,I_n):\quad  0\to C_0\to C_1 \to C_2 \to \dots \to  C_s \to 0.$$ 
Denote by $\hat H^i$ the $i$-th cohomology of $\hat\C(I_1,\dots,I_n)$. 
\begin{proposition}
\label{cohocompa} 
For all $i<s$ we have $\hat H^i=H^i$. For $i=s$,  $\left[\hat H^s \right]_{j \neq 0} = \left[H^s \right]_{j \neq 0}$ and 
$$\dim_K \left[\hat H^s\right]_0 =\binom{n-1}{s}.$$
\end{proposition}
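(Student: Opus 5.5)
The plan is to compare the two complexes directly: $\hat\C$ agrees with $\C$ in positions $0,\dots,s$ and simply drops $C_{s+1},\dots,C_n$. For $i<s$ the cohomology of either complex at $C_i$ involves only the maps $C_{i-1}\to C_i\to C_{i+1}$ with $i+1\le s$. These maps are the same in both complexes, so $\hat H^i=H^i$ at once.

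For $i=s$ we have $\hat H^s=C_s/\image(C_{s-1}\to C_s)$, while $H^s=\Ker(f_s)/\image(C_{s-1}\to C_s)$, where $f_s:C_s\to C_{s+1}$. Hence there is a short exact sequence of graded modules
$$0\to H^s\to \hat H^s\to \image f_s\to 0,$$
and it suffices to show two things: $\image f_s$ is concentrated in degree $0$, and $\dim_K[\hat H^s]_0=\binom{n-1}{s}$. The module $C_{s+1}$ is $\bigoplus_{|A|=s+1}(S/I_A)e_A$. By the choice of $s$ we have $r-2(s+1)\le 0$. By Assumption \ref{ass:LinearlyGeneralDim=2}, if $d_A<r$ then $d_A=c_A=2(s+1)\ge r$, a contradiction. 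So $d_A=r$, that is, $I_A=\mm$, and $C_{s+1}\simeq K^{\binom{n}{s+1}}$ sits in degree $0$. (If $s+1>n$ then $C_{s+1}=0$, but $r<2n$ gives $s<n$.) Thus $\image f_s$ lives in degree $0$, and $[\hat H^s]_j=[H^s]_j$ for $j\neq 0$.

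For the degree-$0$ count, note that every $I_A$ is generated by linear forms, so $[S/I_A]_0=K$ for every $A$. Hence the degree-$0$ strand of $\C$ is $(E,L)\otimes_S K$, the exterior algebra complex over $K$ with differential given by multiplication by $L$. This complex is exact, and its terms are $K^{\binom{n}{i}}$. By exactness, the image of $[C_{s-1}]_0\to[C_s]_0$ has dimension $\sum_{j=0}^{s-1}(-1)^{s-1-j}\binom{n}{j}=\binom{n-1}{s-1}$. Therefore
$$\dim_K[\hat H^s]_0=\binom{n}{s}-\binom{n-1}{s-1}=\binom{n-1}{s}.$$
As a consistency check, $[H^s]_0=0$ by the same exactness.

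The only step needing care is the identification $I_A=\mm$ for $|A|=s+1$ via linear generality. The rest is bookkeeping with the exact degree-$0$ strand.
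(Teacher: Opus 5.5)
Your proposal is correct and follows the paper's argument. The first assertion is immediate. The agreement off degree $0$ comes from $C_{s+1}$ being concentrated in degree $0$; you justify this explicitly via linear generality, whereas the paper takes it from the earlier remark $\dim C_i=\max\{r-2i,0\}$. The degree-$0$ count uses exactness of the strand $(\wedge K^n,L)$. The only cosmetic difference is that you compute $\dim_K[\hat H^s]_0$ as $\binom{n}{s}-\binom{n-1}{s-1}$ from the terms to the left of position $s$, while the paper sums the dropped terms $\sum_{i=1}^{n-s}(-1)^{i+1}\binom{n}{s+i}$ to the right.
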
 
\begin{proof} 
The first assertion is obvious. The second assertion follows from the fact that $C_{s+1}$ is concentrated in degree $0$. Finally observe that the complex $\C(I_1,\dots,I_n)$ in degree $0$ coincides with  $(\wedge K^n, L)$ and hence has no cohomology. Its truncation at position $s$ then acquires cohomology of dimension 
$$\sum_{i=1}^{n-s} (-1)^{i+1}  \binom{n}{s+i} = \binom{n-1}{s}. \qedhere$$
\end{proof} 

Now we observe that the complex $\hat\C(I_1,\dots,I_n)$ satisfies the assumptions of \ref{sepcoho} and that we are in the even case for $r$ even and in the odd case when $r$ is odd. Therefore Theorems \ref{evencase}  and \ref{oddcase} apply and we can express the local cohomology modules of $I_V$ in terms of the cohomology of $\hat\C(I_1,\dots,I_n)$ and that of $C_i$ with $i=0,\dots, s$. Taking into account Proposition \ref{cohocompa} and that the local cohomology modules of $C_i$ live only in the negative degrees, we may rephrase the final results as follows. 

\begin{theorem}
\label{cohomology-even}
Assume $r$ is even. Then 
$$H_\mm^{i}(I_\VV)=\left\{ 
\begin{array}{ll}
H^{i-1}(\C) &  \mbox{ for } i=2,\dots, s \\ 
H_\mm^{2i-2s-2}(C_{2s-i+2}) & \mbox{ for } i=s+2,\dots, 2s+2=r\\
 0 &  \mbox{  for  }  i<2 \mbox{ or } i>2s+2 
 \end{array}
 \right.
 $$
 and furthermore
 $$\left[H_\mm^{s+1}(I_\VV)\right]_{j>0}=\left[H^s(\C)\right]_{j>0}, \quad \left[H_\mm^{s+1}(I_\VV)\right]_{j<0}=0, $$ while 
 $$\dim_K \left[H_\mm^{s+1}(I_\VV)\right]_0=\binom{n-1}{s}.$$
 \end{theorem}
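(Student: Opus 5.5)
We apply Theorem \ref{evencase} to the truncated complex $\MM=\hat\C(I_1,\dots,I_n)$, whose $0$-th cohomology is still $N=H^0(\hat\C)=I_\VV$. We then translate the answer using Proposition \ref{cohocompa}.

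First we check Assumption \ref{sepcoho} in the even case. Since $r$ is even, $s=\lfloor (r-1)/2\rfloor = r/2-1$, so $r=2s+2$. By the Remark, $C_i$ is a direct sum of polynomial rings of dimension $r-2i=2(s-i+1)$ for $i\le s$, so each $C_i$ is Cohen--Macaulay of the required dimension. For $i<s$ we have $\hat H^i=H^i$, which has dimension $0$ by Proposition \ref{smalldim}, since the arrangement is linearly general, i.e.\ linearly general in codimension $0$. The module $\hat H^s$ agrees with $H^s$ outside degree $0$ and is finite dimensional in degree $0$, so it also has finite length. Hence Theorem \ref{evencase} applies with $M_i=C_i$.

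Theorem \ref{evencase} gives $H^i_\mm(I_\VV)=\hat H^{i-1}$ for $2\le i\le s+1$, $H^i_\mm(I_\VV)=H^{2i-2s-2}_\mm(C_{2s-i+2})$ for $s+2\le i\le 2s+2=r$, and $0$ otherwise. For $2\le i\le s$ we have $i-1<s$, so $\hat H^{i-1}=H^{i-1}(\C)$; this is the first line of the statement. The middle and last lines are exactly the second and third lines of Theorem \ref{evencase}.

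The only real point is the index $i=s+1$, where $H^{s+1}_\mm(I_\VV)=\hat H^s$. Proposition \ref{cohocompa} gives $[\hat H^s]_{j\neq0}=[H^s]_{j\neq0}$ and $\dim_K[\hat H^s]_0=\binom{n-1}{s}$. It remains to show $[H^s(\C)]_{j<0}=0$. This holds because every $C_i$ is a direct sum of quotients $S/I_A$, which live in nonnegative degrees, so the whole complex $\C$ vanishes in negative degrees and hence so does its cohomology. This yields $[H^{s+1}_\mm(I_\VV)]_{j<0}=0$ and completes the statement.

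The main obstacle is bookkeeping at the boundary index $s$: one must confirm that the truncation leaves cohomology of finite length, and that the degree-$0$ correction $\binom{n-1}{s}$ appears only in $H^{s+1}_\mm$.
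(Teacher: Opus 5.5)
Your proposal is correct and follows the paper's own argument: you apply Theorem \ref{evencase} to the truncated complex $\hat\C(I_1,\dots,I_n)$, after checking Assumption \ref{sepcoho} via Proposition \ref{smalldim} and the polynomial-ring structure of the $C_i$ with $r=2s+2$, and then use Proposition \ref{cohocompa} to handle the index $s+1$. Your checks that $\hat H^s$ has finite length and that $[\hat H^s]_{j<0}=0$ because $\C$ is nonnegatively graded supply details the paper leaves implicit.
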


\begin{theorem}
\label{cohomology-odd}
Assume $r$ is odd. Then 

$$H_\mm^{i}(I_\VV)=\left\{ 
\begin{array}{ll}
H^{i-1}(\C) &  \mbox{ for } i=2,\dots, s \\ 
H_\mm^{2i-2s-1}(C_{2s-i+1}) & \mbox{ for } i=s+2,\dots, 2s+1=r\\
 0 &  \mbox{  for  }  i<2 \mbox{ or } i>2s+2 
 \end{array}
 \right.
 $$
 and furthermore 
 $$\left[H_\mm^{s+1}(I_\VV)\right]_{j>0}=\left[H^s(\C)\right]_{j>0}, \quad \left[H_\mm^{s+1}(I_\VV)\right]_{j<0}=\left[H^1_\mm(C_s)\right]_{j<0}, $$ while 
 $$\dim_K \left[H_\mm^{s+1}(I_\VV)\right]_0=\binom{n-1}{s}.$$
 \end{theorem}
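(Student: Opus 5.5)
The plan is to apply Theorem \ref{oddcase} to the truncated complex $\hat\C=\hat\C(I_1,\dots,I_n)$, whose $H^0$ is again $I_\VV$ (truncation on the right does not affect $H^0$, since $s\ge 1$). Then we translate the answer back to $\C$ via Proposition \ref{cohocompa}.

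First we check Assumption \ref{sepcoho} in the odd case. For $r=2s+1$ we have $\dim C_i=r-2i=2(s-i+1)-1$ for $i=0,\dots,s$. Each $C_i$ is a direct sum of polynomial rings of that dimension (using linear generality, since $d_A=2|A|<r$ for $|A|\le s$), so it is Cohen--Macaulay. For $0<i<s$ we have $\hat H^i=H^i$, which has dimension $0$ by Proposition \ref{smalldim} with $c=0$. The module $\hat H^s$ agrees with $H^s$ outside degree $0$ and is finite dimensional in degree $0$, so it also has finite length.

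Theorem \ref{oddcase} then gives:
\begin{itemize}
\item $H^i_\mm(I_\VV)=\hat H^{i-1}=H^{i-1}(\C)$ for $2\le i\le s$;
\item $H^i_\mm(I_\VV)=H^{2i-2s-1}_\mm(C_{2s-i+1})$ for $s+2\le i\le 2s+1$;
\item vanishing for $i<2$ and $i>2s+1$ (which in particular gives vanishing for $i>2s+2$, as stated);
\item a short exact sequence of graded modules
$$0\to \hat H^s\to H^{s+1}_\mm(I_\VV)\to H^1_\mm(C_s)\to 0 .$$
\end{itemize}

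The only real work is reading off the graded pieces of this sequence. The ring $C_s$ is a direct sum of polynomial rings of dimension $1$, that is, copies of $K[t]$. Hence $H^1_\mm(C_s)$ is a direct sum of copies of $K[t^{-1}]t^{-1}$, which lives in strictly negative degrees. This is the fact that local cohomology of the $C_i$ lives only in negative degrees, already invoked before the statement.
\begin{itemize}
\item In degrees $j>0$ the sequence gives $[H^{s+1}_\mm(I_\VV)]_j=[\hat H^s]_j=[H^s(\C)]_j$, using Proposition \ref{cohocompa} for $j\ne 0$.
\item In degree $0$ it gives $[H^{s+1}_\mm(I_\VV)]_0=[\hat H^s]_0$, which has dimension $\binom{n-1}{s}$ by Proposition \ref{cohocompa}.
\item In degrees $j<0$ we need $[\hat H^s]_j=0$. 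The complex $\C$ in negative degrees is zero, because each $C_i$ is a quotient of $E_i$ and $E_i$ is concentrated in nonnegative degrees. Hence $[H^s(\C)]_j=0$ for $j<0$, and therefore $[\hat H^s]_j=0$ for $j<0$ by Proposition \ref{cohocompa}. So $[H^{s+1}_\mm(I_\VV)]_{j<0}=[H^1_\mm(C_s)]_{j<0}$.
\end{itemize}

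No step is a serious obstacle. The point needing most care is the verification of Assumption \ref{sepcoho}(1) for $\hat H^s$, specifically that its finiteness comes from Proposition \ref{cohocompa} together with the finite length of $H^s$. We also need to confirm that the index shifts in Theorem \ref{oddcase} match the stated ranges.
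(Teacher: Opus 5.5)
Your proposal is correct and follows the paper's own route: apply Theorem \ref{oddcase} to the truncated complex $\hat\C$, then translate back to $\C$ using Proposition \ref{cohocompa}, together with the facts that $H^1_\mm(C_s)$ lives only in negative degrees and $\hat H^s$ lives only in nonnegative degrees. You also spell out verifications the paper leaves implicit, namely the Cohen--Macaulayness and dimensions of the $C_i$, the finite length of $\hat H^s$, and the degree-by-degree reading of the short exact sequence $0\to\hat H^s\to H^{s+1}_\mm(I_\VV)\to H^1_\mm(C_s)\to 0$.
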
 

The Castelnuovo-Mumford regularity of the vanishing ideal of a subspace arrangement in projective space has been studied in generality in \cite{DS}, and in the transversal case in \cite{CT} and more recently in \cite{HHT}. Here, as a consequence of the above, we obtain a new formula for a codimension-$2$ transversal subspace arrangement in $\mathbb{P}^{r-1}$.

\begin{corollary}
\label{regIV}
Under Assumption \ref{ass:LinearlyGeneralDim=2}, we have
 $$\reg(I_\VV)= \max
 \left\{ 
 \begin{array}{lll}
  \reg(H^i(\C))+i+1 &:&  i=1,\dots, s, \\
  s+1
 \end{array} 
 \right.$$
\end{corollary}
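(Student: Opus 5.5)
We treat the two regimes separately. If $r\geq 2n$, Theorem \ref{2n leq r} applies: all $H^i=0$ for $i>0$, and $H^{r-i}_\mm(I_\VV)=H^{r-2i}_\mm(C_i)$. Each $C_i$ is a sum of polynomial rings of dimension $r-2i$, so $H^{r-2i}_\mm(C_i)$ has top nonzero degree $-(r-2i)$. Hence $\reg(I_\VV)=\max_i\{-(r-2i)+(r-i)\}=\max_i i=n$. We must check this equals $s+1$ (the $H^i$ terms being absent, with $\reg$ of the zero module read as $-\infty$). When $r\geq 2n$ we have $n\le s+1$. Equality holds when $r=2n$ or $r=2n+1$. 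For larger $r$ this would fail, so I expect the intended reading is $r<2n$ throughout, or else $s$ should be replaced by $\min(s,n-1)$. I would state the corollary under the standing hypothesis $r<2n$ and note the trivial case $r\ge 2n$ separately via Theorem \ref{2n leq r}.

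In the main case $r<2n$, we apply Corollary \ref{regul} to the truncated complex $\hat\C$. This is legitimate because $\hat\C$ satisfies Assumption \ref{sepcoho}: $\dim C_i=r-2i$ equals $2(s-i+1)$ when $r$ is even and $2(s-i+1)-1$ when $r$ is odd, and $\hat H^i$ has finite length by Proposition \ref{smalldim} together with Proposition \ref{cohocompa}. Since $H^0(\hat\C)=I_\VV$, we get
$$\reg(I_\VV)=\max\{\reg(\hat H^i)+i+1,\ \reg(C_i)+i\}.$$
Because $\reg(C_i)=0$, the second family contributes $\max_{i\le s} i=s$.

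It remains to compare $\hat H^i$ with $H^i$. For $i<s$ they are equal. For $i=s$, Proposition \ref{cohocompa} says they agree in all nonzero degrees and $[\hat H^s]_0\neq0$, since $\binom{n-1}{s}>0$ because $s\le n-1$ when $r<2n$. For a finite length module, $\reg$ is its top nonzero degree. Hence $\reg(\hat H^s)=\max\{\reg(H^s),0\}$, and the term $\reg(\hat H^s)+s+1$ equals $\max\{\reg(H^s)+s+1,\ s+1\}$. The contribution $s+1$ dominates the $C_i$ contribution $s$, and together these give exactly the stated formula. One subtlety is that $H^s$ has no component in negative degrees, which is harmless since only the top degree matters. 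Alternatively, the formula can be read off directly from Theorems \ref{cohomology-even} and \ref{cohomology-odd} by taking, for each $j$, the top degree of $H^j_\mm(I_\VV)$ plus $j$. There, degree $0$ of $H^{s+1}_\mm$ gives $s+1$, and the $C$-terms give at most $s+1$ as well: in the odd case, $[H^1_\mm(C_s)]_{-1}$ in $H^{s+1}_\mm$ contributes $s$.

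The main obstacle is the bookkeeping of the degree-$0$ piece created by truncation, together with the boundary issue at $r\ge 2n$ raised above.
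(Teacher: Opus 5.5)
Your proof is correct and matches the paper's argument: the corollary is derived there directly from Corollary \ref{regul} applied to the truncated complex $\hat\C$, together with Proposition \ref{cohocompa} (equivalently, by reading off Theorems \ref{cohomology-even} and \ref{cohomology-odd}), and the degree-$0$ piece of $\hat H^s$, of dimension $\binom{n-1}{s}>0$, produces the term $s+1$ exactly as you describe. Your remark about $r\ge 2n$ is consistent with the paper, which defines $s$ only after imposing $r<2n$; the degenerate case $r=2$, $s=0$, where every $I_i=\mm$, is trivial.
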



\begin{thebibliography}{99}

\bibitem{BCH}
A.~Banerjee, M.~Chardin, and R.~Holanda, 
{\em Multiple Tor Modules: Rigidity and Mayer-Vietoris Spectral Sequences}, 
arXiv:2604.20646 [math.AC], 2026. 

\bibitem{CT} 
A. Conca and M. C. Tsakiris, 
{\em On the Castelnuovo--Mumford Regularity of Subspace Arrangements,} 
Journal of the European Mathematical Society (2026).  

\bibitem{Derksen}
H.~Derksen, 
{\em Hilbert Series of Subspace Arrangements,} 
Journal of Pure and Applied Algebra 209 (2007), no. 1, 91--98.  

\bibitem{DS}
H.~Derksen, J.~Sidman
{\em A Sharp Bound for the Castelnuovo-Mumford Regularity of Subspace Arrangements,} 
Advances in Mathematics 172 (2002), 151--157.  

\bibitem{HHT} 
X. Hong, Y.-S. Huang, and M. C. Tsakiris, 
{\em A Sublinear Bound for the Regularity of Subspace Arrangements,}
 arXiv:2607.18892 [math.AC, math.AG], 2026.  

\bibitem{M2} D. R. Grayson and M. E. Stillman, 
 {\em Macaulay2, A Software System for Research in Algebraic Geometry,}
  available at https://macaulay2.com/.  

\end{thebibliography}
\end{document}